\declaretheoremstyle[bodyfont=\sl]{slanted}
\declaretheorem[name=Definition,style=definition,qed=$\dashv$,
numberwithin=section]{dfn}
\declaretheorem[name=Definition,style=definition,numbered=no,qed=$\dashv$]{dfn*}
\declaretheorem[name=Definition,style=definition,numbered=no]{dfnnoqed*}
\declaretheorem[name=Theorem,style=slanted,sibling=dfn]{tm}
\declaretheorem[name=Theorem,style=slanted,numbered=no]{tm*}
\declaretheorem[name=Corollary,style=slanted,numbered=no]{cor*}
\declaretheorem[name=Remark,style=definition,sibling=dfn]{rem}
\declaretheorem[name=Question,style=definition,sibling=dfn]{ques}
\declaretheoremstyle[headfont=\scshape]{claimstyle}
\declaretheorem[name=Claim,style=claimstyle]{clm}
\newcommand{\RR}{\mathbb R}
\newcommand{\sub}{\subseteq}
\newcommand{\cross}{\times}
\newcommand{\om}{\omega}
\newcommand{\pow}{\mathcal{P}}
\newcommand{\OR}{\mathrm{OR}}
\newcommand{\Card}{\mathrm{Card}}
\newcommand{\Tt}{\mathcal{T}}
\newcommand{\Uu}{\mathcal{U}}
\newcommand{\vareps}{\varepsilon}
\newcommand{\rg}{\mathrm{rg}}
\newcommand{\ins}{\trianglelefteq}
\newcommand{\pins}{\triangleleft}
\newcommand{\crit}{\mathrm{cr}}
\newcommand{\rest}{\!\upharpoonright\!}
\newcommand{\lh}{\mathrm{lh}}
\newcommand{\sats}{\models}
\newcommand{\J}{\SS}
\newcommand{\HC}{\mathrm{HC}}
\newcommand{\ZFC}{\mathsf{ZFC}}
\newcommand{\ZF}{\mathsf{ZF}}
\newcommand{\es}{\mathbb{E}}
\newcommand{\her}{\mathcal{H}}
\DeclareMathOperator{\card}{card}
\DeclareMathOperator{\cof}{cof}
\title{Power $\Sigma_1$ in Card with two Woodin cardinals}
\author{Farmer Schlutzenberg\\
TU Vienna}
\begin{document}

\maketitle

\begin{abstract}
V\"a\"an\"anen and Welch asked in the paper \cite{power_set_Sigma_1_Card} \emph{When cardinals determine the power set: inner models and H\"artig quantifier logic}
which large cardinals are consistent with the power set operation $x\mapsto\pow(x)$ being $\Sigma_1$-definable in the predicate $\Card$ of all cardinals. We show that, relative to large cardinals,  this property is consistent together with the existence of two Woodin cardinals.
\end{abstract}

In \cite{power_set_Sigma_1_Card}, Jouko V\"a\"an\"anen
and Philip Welch examine the relationship between large cardinals and the
 $\Sigma_1^{\mathrm{Card}}$-definability of the power set operation, especially below one Woodin cardinal.
Here one says that \emph{power is
$\Sigma_1^{\mathrm{Card}}$}
 iff there is a $\Sigma_1$ formula $\varphi$ of the language of set theory\footnote{Recall here that the usual language for definability over mice  includes predicate symbols for the extender sequence, but here when we talk about $\Sigma_1^{\mathrm{Card}}$, we certainly aren't allowed such predicates in our language.} such that for all $x,y$, we have
\[ y=\pow(x)=\{A\bigm|A\sub x\} \]
iff
\[ \exists\alpha\in\OR\ \big[\varphi(x,y,\mathrm{Card}\cap\alpha)\big].\]
They show (see \cite[Lemma 3.7]{power_set_Sigma_1_Card}) that if $M$ is a proper class mouse satisfying ``there is no proper class transitive inner model with a Woodin cardinal'' then $M\models$ ``Power is $\Sigma_1^{\Card}$''.
They go on to ask in \cite[Question 4.5]{power_set_Sigma_1_Card} about which large cardinals are consistent with power being $\Sigma_1^{\mathrm{Card}}$.
As a consequence of \cite[Lemma 3.7]{power_set_Sigma_1_Card} and some standard arguments,
if $M_1^\#$ exists and is fully iterable
then $M_1\sats$ ``power is $\Sigma_1^{\mathrm{Card}}$''. (Recall here that $M_1$ is the canonical minimal proper class mouse with a Woodin cardinal. We will sketch the proof of this conclusion in Remark \ref{rem:M_1} below.)
We make here a further step  regarding this question:

\begin{tm}\label{tm:y=P(x)_Sigma_1^Card}Suppose that $M_2^\#$ exists and is $(0,\om_1+1)$-iterable.
Then there is a proper class transitive inner model which models ZFC + ``There are 2 Woodin cardinals'' + ``power is $\Sigma_1^{\mathrm{Card}}$''.
\end{tm}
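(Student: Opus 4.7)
The plan is to take as the inner model $M_2$ itself, the proper class minimal iterable mouse with two Woodin cardinals obtained by iterating the top measure of $M_2^\#$ out of the ordinals, and to verify $M_2\sats$ ``power is $\Sigma_1^{\Card}$'' by adapting the $M_1$-argument sketched in Remark~\ref{rem:M_1} to the two-Woodin setting.

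To $\Sigma_1^{\Card}$-define $\pow(x)^{M_2}$, one existentially quantifies over a cardinal $\alpha$ of $M_2$ with $\pow(x)^{M_2}\in M_2|\alpha$, and expresses first-order from $\Card\cap\alpha$ that $y$ is the power set of $x$ as computed in a reconstructed initial segment of $M_2$ of height $\alpha$. The crux is a $\Sigma_1^{\Card}$-recovery of (enough of) $\es^{M_2}\rest\alpha$ from $\Card\cap\alpha$. Letting $\delta_0<\delta_1$ be the Woodins of $M_2$, the recovery splits into three regions. Below $\delta_0$, the segment $M_2|\delta_0$ is a sound premouse with no Woodin, so the set-sized analog of Lemma 3.7 of \cite{power_set_Sigma_1_Card} applies. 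Between $\delta_0$ and $\delta_1$, the extender sequence over the base $M_2|\delta_0$ is $M_1$-like, and the relativization of the Remark~\ref{rem:M_1} argument should recover it from the cardinal pattern in $(\delta_0,\delta_1)$. Above $\delta_1$, $M_2$ is essentially a thin $L[\es]$-closure over $M_2|\delta_1$. Each $\delta_i$ is $\Sigma_1^{\Card}$-identifiable by its cardinal-arithmetic signature inside $M_2$, with $\delta_0$ existentially quantified in the outer matrix.

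The main obstacle is the middle region: since $M_2$ contains proper class inner models with a Woodin, Lemma 3.7 does not apply globally, and one must run the $M_1$-style argument in a genuinely relativized form over $M_2|\delta_0$, using the minimality of $M_2$ as a two-Woodin mouse to ensure that the $\Sigma_1^{\Card}$-recovered segment of $M_2$ between $\delta_0$ and $\delta_1$ is unambiguous. A secondary point is keeping the outer complexity at $\Sigma_1$ when the base $M_2|\delta_0$ itself must be existentially quantified; this should be absorbed into the same existential quantifier over $\alpha$. If direct relativization breaks down, the fallback is to replace $M_2$ by a variant proper class inner model---for example one obtained by a further genericity iteration of $M_2^\#$---designed to make the needed extender data $\Sigma_1^{\Card}$-visible while preserving both Woodins.
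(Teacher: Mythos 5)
There is a genuine gap, and it sits exactly where the paper locates the ``main new obstacle.'' Your primary plan is to take the inner model to be $M_2$ itself and to handle the region below $\delta_0$ by ``the set-sized analog of Lemma 3.7,'' on the grounds that $M_2|\delta_0$ has no Woodin cardinal. But the hypothesis of \cite[Lemma 3.7]{power_set_Sigma_1_Card} is not that the model has no Woodin; it is that the model has no proper class transitive \emph{inner model} with a Woodin, and $M_2|\delta_0$ fails this badly: it contains $M_1^\#$ as an element and hence has proper class inner models with a Woodin cardinal. Consequently, identifying the initial segments of $M_2|\delta_0$ requires certifying the iterability of Q-structures that are themselves nontrivial (1-small but not 0-small) mice, and the only available certificate at this level is $\Pi^1_2$-iterability witnessed via the Martin--Solovay tree $T_2$. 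Whether $\{T_2\}$ is $\Sigma_1^{\Card}$-definable \emph{inside $M_2$} is precisely what is not known, which is why the paper explicitly states (Remark \ref{rem:M_1}, first remark after the theorem) that its proof does not yield $M_2\sats$ ``power is $\Sigma_1^{\Card}$'' and that this remains open. You have also inverted the difficulty: the region between $\delta_0$ and $\delta_1$ is the comparatively easy one (Q-structures there are of the form $\mathcal{J}_\alpha(M(\Tt))$, as in the $M_1$ case), whereas the region below $\delta_0$ is where the new idea is needed.

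Your fallback---``a further genericity iteration of $M_2^\#$''---gestures in the right direction (change the model, not the argument) but does not supply the mechanism. The paper's actual move is to take $N$ to be an $M_2$-like premouse witnessing the lexicographically least value of $(\om_1^N,u_2^N,\delta_0^N,\delta_0^{+N},\delta_1^N,\delta_1^{+N})$. This minimality forces any candidate premouse $M$ with the correct cardinals and satisfying $\varphi_0$ to compute $\om_1$ and $u_2$ correctly, hence all $u_n$ and hence $T_2$ correctly, which makes $\{T_2\}$ $\Sigma_1^{\Card}$ in $N$ and unlocks the $\Pi^1_2$-iterability certificates for segments below $\delta_0$. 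Nothing in a genericity iteration of $M_2^\#$ obviously arranges this, so without that (or some substitute) your argument does not close.
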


\begin{rem}\label{rem:M_1}Note that the conclusion of the theorem is not simply that $M_2\sats$ ``power is $\Sigma_1^{\mathrm{Card}}$''; our proof will not give this, and the author does not know whether it holds in $M_2$.\end{rem}
\begin{rem}
Let us deduce from \cite[Lemma 3.7]{power_set_Sigma_1_Card}
the fact that $M_1\models$ ``Power is $\Sigma_1^{\mathrm{Card}}$.
Write $\delta_0^{M_1}$ for the unique Woodin of $M_1$, let $\delta<\delta_0^{M_1}$. Then $M_1\sats$ ``There is no proper class inner model in which $\delta$ is Woodin''. For suppose $N\sub M_1$ is such an inner model. Then there is a fully backgrounded $L[\es]$-construction of $N$ which produces a proper class $1$-small premouse $P$ and $\eta\leq\delta$ such that $P\sats$ ``$\eta$ is Woodin'', and
such that $P$ is $0$-maximally short-tree iterable\footnote{For \emph{$0$-maximal} see \cite{outline}. A $0$-maximal tree $\Tt$ of limit length is \emph{short} if there is $\alpha\in\OR$
such that $\J_\alpha(M(\Tt))$ is either a Q-structure for $\delta(\Tt)$ or projects $<\delta(\Tt)$. See \cite{outline}.}
(in $N$, and hence also in $M_1$);
see \cite[Theorem 11.3]{fsit} and \cite{itertrees}.
Working in $M_1$, we can compare $P$ with $M_1$, and because $\eta<\delta_0^{M_1}$
and both are proper class, this comparison must have length $\delta_0^{M_1}$,
producing a maximal (that is, limit length but non-short) tree on $P$ at that stage.
But then fixing some $\gamma\in(\eta,\delta_0^{M_1})$ such that $\gamma$ is inaccessible in $M_1$,
the comparison of $M_1|\gamma$ with $L_\gamma[P|\delta]$ corresponds to an initial segment of the overall comparison, and $L_\gamma[P|\delta]$ out-iterates $M_1|\gamma$, which leads to a contradiction, via standard arguments with universal weasels.

So $M_1|\delta_0^{M_1}\models\ZFC$ +  ``There is no proper class inner model with a Woodin cardinal'', and so by \cite[Lemma 3.7]{power_set_Sigma_1_Card},  $M_1|\delta_0^{M_1}\sats$ ``Power is $\Sigma_1^{\mathrm{Card}}$''.

Now we claim that for sets $x,y\in M_1$, we have that
$y=\pow(x)\cap M_1$ iff
$M_1\sats$ ``there is are cardinals $\delta<\varepsilon$
and a $1$-small premouse $N$
such that:
\begin{enumerate}[label=--]
\item $\OR^N=\delta$
\item the universe of $N$ is just $\her_\delta$,
\item $N\sats$ ``I am $(\om,\OR)$-iterable'',
\item $L_\varepsilon[N]$ is a premouse modelling ``$\delta$ is Woodin'', and
\item  $x\in L_\varepsilon[N]$
and $y=\pow(x)\cap L_\varepsilon[N]$''.
\end{enumerate}
Note that since $M_1|\delta_0^{M_1}\models$ ``Power is $\Sigma_1^{\mathrm{Card}}$'',
this yields a $(\Sigma_1^{\mathrm{Card}})^{M_1}$ definition
of $x\mapsto\pow(x)^{M_1}$, as desired.

So it suffices to see the claim.
For this, it suffices to see that $M_1|\delta_0^{M_1}$ is the unique  premouse $N$
with the properties above.
(For then
 since $\varepsilon$
is a cardinal in $M_1$, we have $\pow(x)\cap M_1\sub L_\varepsilon[N]$ for all $x\in L_\varepsilon[N]$.)
This follows from results in \cite{extmax}.
That is, suppose $N$ is as above, but $M_1|\delta_0^{M_1}\neq N$. We can compare $M_1|\delta_0^{M_1}$ with $N$, through $\delta_0^{M_1}$-many stages of comparison, or until the models are lined up through height $\delta_0^{M_1}$, whichever occurs first.
Let $\Tt$ be the resulting tree on $M_1$,
and $\Uu$ that on $N$.
Supposing that $N\neq M_1|\delta_0^{M_1}$,
let $\theta<\delta_0^{M_1}$ be such that $N|\theta\neq M_1|\theta$, and let $\gamma\in(\theta,\delta_0^{M_1})$ be inaccessible in $M_1$.
Let $\zeta$ be least such that either $M^\Tt_\zeta|\gamma=M^\Uu_\zeta|\gamma$ or $\gamma<\lh(E^\Tt_\zeta)$ or $\gamma<\lh(E^\Uu_\zeta)$, whichever is defined at stage $\zeta$.
Then since $\her_\delta^{M_1}$ is the universe of $N$, standard considerations with universal weasels
give that $[0,\zeta]^\Tt$ is non-dropping and $i^\Tt_{0\zeta}(\theta)=\theta$
and $[0,\zeta]^\Uu$ is non-dropping and $i^\Uu_{0\zeta}(\theta)=\theta$.
Now both $\Tt\rest\zeta$ and $\Uu\rest\zeta$
must be non-trivial, since $\her_\gamma^{M_1}=\her_\gamma^{N}$.
Let $\alpha+1=\mathrm{succ}^\Uu(0,\zeta]$.
Then $E^\Uu_\alpha$ is $N$-total, hence $M_1$-total.
But then by \cite[Theorem 3.5]{extmax}, $E^\Uu_\alpha\in\es_+(M^\Tt_\alpha)$, so $E^\Uu_\alpha$ is \emph{not} used in $\Uu$, a contradiction.
\end{rem}

We now move to the proof of the theorem, on mice with 2 Woodins.

\begin{proof}[Proof of Theorem \ref{tm:y=P(x)_Sigma_1^Card}]
We will use an overall similar kind of  argument to that used in the combination of \cite{power_set_Sigma_1_Card} and Remark \ref{rem:M_1} for $M_1$. But
\cite{power_set_Sigma_1_Card} focuses on
explicit ``core model'' arguments,
whereas we will handle the analogous aspects of the arugment here using methods as in Remark \ref{rem:M_1} and the proof of \cite[Theorem 5.1]{low_lev_def_above_LC}, which avoid explicitly dealing with the ``core model''.  (But in any case,
the underlying reasoning is similar.)

The basic method we will use  for identifying the power function in a $\Sigma_1^{\mathrm{Card}}$ fashion
inside a (to be chosen) premouse $M$  will be to in fact identify the extender sequence $\es^{M}$ of $M$ in such a fashion,
or equivalently, to identify the sequence of initial segments $N\ins M$ of $M$.
(This is similar to that used  for $M_1$.)
This will be enough, since inside any mouse $P$,  given any cardinal $\kappa$,
$\pow(\kappa)\cap P\sub P|\kappa^{+P}$, the initial segment of $P$ of ordinal height $\kappa^{+P}$.
Also because of this, it is enough to identify the segments $N\ins M$ such that $\OR^N$ is a cardinal of $M$. We will identify these correct initial segments $N$, as the premice $N$ of cardinal height
such that $\mathrm{Card}^N=\mathrm{Card}\cap N$
and $N$ is sufficiently iterable.
The main new obstacle in implementing this plan for $M_2$ (or some 2-Woodin premouse $M$), instead of $M_1$,
 is in giving a $\Sigma_1^{\mathrm{Card}}$ formulation of a strong enough iterability condition for (enough) proper segments of $M$, especially those below its least Woodin cardinal $\delta_0^{M}$ (this is working inside $M$).

Let us consider the formulation of iterability
for a $2$-small $\om$-sound premouse $P$ such that $\rho_\om^P=\om$ (in general we will need to deal with premice higher up, above measurable cardinals, for example, but for the purposes of the present sketch, we restict to $\rho_\om^P=\om$). We would like to identify in a simple manner an appropriate tree $T$ searching for a proof that $P$ is \emph{not} iterable,
and then our certificate of iterability would be a transitive model which can rank $T$, proving that $T$ is wellfounded.
Roughly, $T$ should build an $\om$-maximal iteration tree $\Tt$ on $P$,
embedding the tree order of $\Tt$ into the ordinals, and should build witnesses to the iterability of the Q-structures
used to guide $\Tt$ at limit stages,
and should build a proof that $\Tt$
reaches a stage from which it cannot be continued in such a manner,
or exhibit that the last model of $\Tt$ is illfounded, if it has one. The main issue here is the iterability of the Q-structures.
Toward this purpose, our hope/plan is to use the fact that $\Pi^1_2$-iterability
is a strong enough iterability condition
in the present context, and to guarantee $\Pi^1_2$-iterability via the Martin-Solovay tree $T_2$ for $\Pi^1_2$. Well, \emph{if} $\{T_2\}$ is $\Sigma_1^{\mathrm{Card}}$,
then we can use it to search for the Q-structures, expecting that $\Pi^1_2$-iterability identifies them. One further detail, though,
is that if $P$ is not iterable,
it seems it might in general be that there is some limit length
tree $\Tt$ on $P$, such that $M_1(M(\Tt))$ has no segment which is a Q-structure for $M(\Tt)$, and so satisfies ``$\delta(\Tt)$ is Woodin''.
But equivalently, there is some
$\Pi^1_2$-iterable, \emph{non}-2-small premouse $N$
such that $M(\Tt)\pins N$, $N\sats$ ``$\delta(\Tt)$ is Woodin''.
Moreover, this occurring is proof in itself that $P$ is not iterable
(since $P$ is $2$-small and $\rho_\om^P=\om$, and assuming
that the earlier Q-structures were all iterable). So our tree $T$
can simply produce this situation as a witness to non-iterability.

So a key question is whether $\{T_2\}$ is in fact $\Sigma_1^{\mathrm{Card}}$.
The author does not know whether
one can simply prove that this holds in $M_2$. But we can obtain it in some $M_2$-like inner model, and that is where we will get the power set function being $\Sigma_1^{\mathrm{Card}}$.\footnote{It seems that instead of working with $T_2$ and $\Pi^1_2$-iterability, one might try to embed the Q-structures into something which is built by $L[\es]$-construction by some model of the form $M_1^\#(X)$
for some large enough (but rather arbitrary) transitive set $X$.
But if we do not know that $\RR\sub X$,
then it is not so clear that one should be able to find such an $L[\es]$-construction in $M_1^\#(X)$,
and also, there is the challenge of first identifying the extender sequence of $M_1^\#(X)$.}

Let us fix a finite sub-theory $\varphi_0$ of the first order theory of $M_2|\delta_1^{+M_2}$, including ``I am the universe of a $2$-small premouse with 2 Woodin cardinals'', plus maybe some more statements which we will add as needed later. Say that a proper class premouse $N$ is \emph{$M_2$-like} iff $N|\delta_1^{+N}\sats\varphi_0$.

Let $(\om_1^0,u_2^0,\delta_0^0,\delta_0^{+0},\delta_1^0,\delta_1^{+0})$ be the lexicographically least tuple $(w,u,\delta,\delta',\vareps,\vareps')$
such that there is an $M_2$-like
(hence proper class) premouse $N$
such that \[ (w,u,\delta,\delta',\vareps,\vareps')=(\om_1^N,u_2^N,\delta_0^N,\delta_0^{+N},\delta_1^N,\delta_1^{+N}).\]
(This quantification over proper classes is equivalent to one over sets,
since $M_2$-like premice have only a set-sized extender sequence.)

Let $N$ be $M_2$-like,
witnessing the choice of
$(\om_1^0,u_2^0,\delta_0^0,\delta_0^{+0},\delta_1^0,\delta_1^{+0})$. We will prove that $N\sats$ ``The power set function is $\Sigma_1^{\mathrm{Card}}$'' (given that $\varphi_0$ already includes the finitely many clauses we might retroactively add later), which suffices to prove the theorem.

Work from now on in $N$.
(Of course, $N\sats$ ``I am not fully iterable'', but by our (later) choice of $\varphi_0$, $N$ satisfies various first-order consequences of iterability.)

For a real $x$ and an ordinal $\alpha$,
let $\iota_0(x,{\alpha})$ denote the least $x$-indiscernible $\mu$ such that $\mu>\alpha$.

\begin{clm}\label{clm:identify_u_2}
Let $M$ be a set-sized $2$-small premouse
such that $M\sats$ ``there are 2 Woodins'',
 $\delta_1^{+M}\in\mathrm{Card}$
 (where $\delta_1^M$ is the second Woodin of $M$)
and $M|\delta_1^{+M}\sats\varphi_0$.
Then $u_2^M=u_2$. Therefore $\{u_2\}$ is $\Sigma_1^{\mathrm{Card}}$.
\end{clm}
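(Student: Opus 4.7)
The plan is to combine the lex-minimality of the tuple $(\om_1^0,u_2^0,\ldots)$ characterizing $N$ with the trivial inclusion $\RR^M\sub\RR^N$ (which holds since $M\in N$). I aim to show $u_2^M\leq u_2^N$ via the inclusion together with a uniform real-based characterization of $u_2$, and $u_2^M\geq u_2^N$ via minimality applied to an $M_2$-like extension of $M$.

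First I would augment $\varphi_0$ to include the first-order consequences of $M_2$-like-ness asserting that $M_1^\#(x)$ exists for every real $x$, together with the characterizations that $\om_1$ is the least ordinal $>\om$ which is an $x$-indiscernible for every real $x$, and that $u_2$ is the least ordinal $>\om_1$ which is an $x$-indiscernible for every real $x$. All of these hold in $M_2|\delta_1^{+M_2}$ and so may be absorbed into $\varphi_0$. Then, starting from $M$ inside $N$, I would produce a proper class $M_2$-like premouse $\bar M$ extending $M$ via a fully backgrounded $L[\es^M]$-construction run above $\delta_1^{+M}$; this construction adds no reals, so $\RR^{\bar M}=\RR^M$, and consequently $\om_1^{\bar M}=\om_1^M$ and $u_2^{\bar M}=u_2^M$. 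Lex-minimality applied to $\bar M$ then yields $(\om_1^M,u_2^M)\geq_{\lex}(\om_1^N,u_2^N)$.

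Conversely, the inclusion $\RR^M\sub\RR^N$, together with the uniform characterizations enforced by $\varphi_0$, gives $\om_1^M\leq\om_1^N$ and $u_2^M\leq u_2^N$: any ordinal which is an $x$-indiscernible for all $x\in\RR^N$ is, a fortiori, an $x$-indiscernible for all $x\in\RR^M$, so the least such candidate in $M$ is at most that in $N$. Combining both directions forces $\om_1^M=\om_1^N$ and $u_2^M=u_2^N=u_2$. The second assertion of the claim then follows: $\alpha=u_2$ iff there exists a set-sized 2-small $M$ satisfying the claim's hypotheses with $u_2^M=\alpha$ (such $M$ exist, e.g., $M=N|\kappa$ for any $\kappa\in\Card$ strictly above $\delta_1^{+N}$), and this is plainly $\Sigma_1^\Card$.

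The main obstacle is producing the proper class $M_2$-like extension $\bar M$ of $M$ and verifying that it preserves $\om_1^M$ and $u_2^M$; this requires standard but delicate facts about fully backgrounded $L[\es]$-constructions above the top $M$-Woodin. A subsidiary point is confirming that the uniform real-based characterizations of $\om_1$ and $u_2$ actually capture these ordinals in both $M$ and $N$, which in turn depends on $\varphi_0$ being strong enough to enforce the existence of the relevant sharps.
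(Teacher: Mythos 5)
Your skeleton matches the paper's: a lower bound on $(\om_1^M,u_2^M)$ via lex-minimality applied to a proper class $M_2$-like premouse extending $M$, and an upper bound via the correctness of $M$ about indiscernibles. But the step you yourself flag as ``the main obstacle'' --- producing $\bar M$ --- is where your route has a real gap, and the fix is much simpler than a fully backgrounded construction. Since $M$ is $2$-small with two Woodin cardinals, $\es^M$ has no extenders indexed above $\delta_1^M$, so $M=L_\alpha[M|\delta_1^M]$ for some $\alpha$, and the desired proper class premouse is simply $L[M|\delta_1^M]$. The hypothesis $\delta_1^{+M}\in\Card$ is exactly what makes this $M_2$-like: by condensation, every subset of $\delta_1^M$ in $L[M|\delta_1^M]$ lies in some $L_\beta[M|\delta_1^M]$ with $\beta<(\delta_1^M)^{+V}=\delta_1^{+M}$, and these levels are levels of $M$; hence $\delta_1^{+L[M]}=\delta_1^{+M}$, both Woodins survive, and $L[M]|\delta_1^{+L[M]}=M|\delta_1^{+M}\sats\varphi_0$. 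A fully backgrounded construction run above $\delta_1^{+M}$ is the wrong tool here: its levels can project below $\delta_1^{+M}$ (changing $\delta_1^{+}$ or destroying Woodinness), there is no reason its output end-extends $M$ as a premouse, and you would in any case need the $2$-smallness observation to see that what it produces is just the constructible closure you could have written down directly. Your assertion that the construction ``adds no reals'' is not something you can take for granted without that analysis.

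The second gap is in the direction $u_2^M\le u_2$. It is not enough that $\varphi_0$ enforce the \emph{internal existence} of sharps in $M$: for your ``a fortiori'' step you need $M$ to compute $x^\#$ \emph{correctly} for $x\in\RR^M$, since otherwise $M$'s notion of $x$-indiscernible, and hence $u_2^M$, bears no relation to the true one. Correctness does hold, but it must be derived in the right order: first conclude $\om_1^M=\om_1$ (the trivial inequality $\om_1^M\le\om_1$ from $M\sub V$, plus minimality of $\om_1^0$ applied to $L[M]$); then note that a putative $(x^\#)^M$ is a countable-in-$M$ structure all of whose linear iterates of length $<\om_1^M=\om_1$ are wellfounded, which is an absolute computation, so it really is $x^\#$; only then do the two inequalities for $u_2$ make sense and combine as you describe. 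Your remark about witnesses for the $\Sigma_1^{\Card}$ definition of $\{u_2\}$ (e.g.\ $M=N|\kappa$ for a cardinal $\kappa>\delta_1^{+N}$) is correct and fills in a point the paper leaves implicit.
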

\begin{proof}
Since $M$ is $2$-small, $M=L_\alpha[M|\delta_1^M]$ for some $\alpha$.
Moreover, since
 $\delta_1^{+M}\in\Card$,
 note that $L[M]\sats$ ``$\delta_0^M,\delta_1^M$ are both Woodin'',
 and in fact, $M|\delta_1^{+M}$ is a cardinal segment of $L[M]$,
 and has universe $(\her_{\delta_1^{+M}})^{L[M]}$.
 So since $M|\delta_1^{+M}\sats\varphi_0$,
 $L[M]$ is $M_2$-like.

Now $\om_1^M=\om_1$. For certainly $\om_1^M\leq\om_1$,
since $M\sub V$. But then if $\om_1^M<\om_1$, the properties of $M$ contradict the minimality of $\om_1^0$.

Now let us see that $u_2^M=u_2$.
Certainly $u_2^M\leq u_2$,
since $M$ is correct about sharps for reals, and $M\sats$ ``$u_2^M=\sup_{x\in\RR}\iota_0(x,\om_1)$''.
But  like for $\om_1^M$ and since $\om_1^M=\om_1^0$,
if we had $u_2^M<u_2$, then the properties of $M$ would contradict the lexicographic minimality of $(\om_1^0,u_2^0)$.
\end{proof}

\begin{clm}\label{clm:T_2_is_Sigma_1^Card}
For any  $M$ as in Claim \ref{clm:identify_u_2}, we have $T_2^M=T_2$. Therefore $\{T_2\}$ is $\Sigma_1^{\mathrm{Card}}$.
\end{clm}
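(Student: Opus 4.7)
The plan is to mirror the structure of the proof of Claim \ref{clm:identify_u_2}: first verify that the Martin--Solovay tree is computed identically in $M$ and in $N$ using $u_2$ and the sharp operator, and then package the existence of such an $M$ into a $\Sigma_1^{\Card}$ formula. Recall that $T_2$ is a tree on $\om \times u_2$ canonically definable from $u_2$ together with the sharp function on reals, whose nodes encode partial $u_2$-valued rank data for the Shoenfield-style trees associated with a fixed universal $\Pi^1_2$ formula.

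The key steps, in order, are as follows. First, apply Claim \ref{clm:identify_u_2} to conclude $u_2^M = u_2^N$, so that $T_2^M$ and $T_2^N$ at least have the same underlying set $\om \times u_2$. Second, observe that $M$, being $2$-small with two Woodin cardinals, contains $M_1^\#$ (and more generally $M_1^\#(z)$ for each real $z \in M$), hence correctly computes sharps for its reals; the same holds for $N$. Third, verify node-by-node agreement: the membership condition ``$(s,\vec\alpha) \in T_2$'' is expressible as a $\Sigma_1$ condition on $u_2$ and on the sharp-indiscernibles below it, so absoluteness follows from the preceding two steps, giving $T_2^M = T_2$. Fourth, for the $\Sigma_1^{\Card}$ conclusion, use the defining formula ``there exist $\alpha \in \OR$ and a set $M \in V_\alpha$ satisfying the hypotheses of Claim \ref{clm:identify_u_2}, with $\delta_1^{+M} \in \Card \cap \alpha$, and $T = T_2^M$''; existence is witnessed by a sufficiently large cardinal segment of $N$ itself (and possibly requires strengthening $\varphi_0$ to code the relevant facts), while uniqueness is exactly what the first part of the claim provides.

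The main obstacle to anticipate is in the third step: $M$ is only set-sized and may a priori have strictly fewer reals than $N$, so one might worry that $T_2^M$ is missing branches realized by reals in $\RR^N \setminus \RR^M$. I would address this by using a form of the Martin--Solovay definition in which a node $(s,\vec\alpha)$ belongs to $T_2$ iff its finite data is \emph{consistent} with the Martin--Solovay scale --- a condition expressible purely from $u_2$ and the sharp function applied to finite strings, without needing to exhibit an ambient real realizing the branch. Under this formulation, node-membership is a local combinatorial fact about the computing model's sharps and cardinal structure, so absoluteness between $M$ and $N$ follows from steps one and two. Any retroactive strengthening of $\varphi_0$ that is required (e.g.\ to ensure that $M$ correctly interprets the Martin--Solovay formalism internally) is harmless since the claim allows such augmentations.
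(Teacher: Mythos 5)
Your overall strategy (show that any $M$ as in Claim \ref{clm:identify_u_2} computes the Martin--Solovay tree correctly, then fold the existence of such an $M$ into a $\Sigma_1^{\Card}$ formula) is the right one and matches the paper, but there is a genuine gap in the correctness step. You treat $T_2$ as a tree on $\om\cross u_2$ definable from $u_2$ together with the sharp function. In fact the Martin--Solovay tree for $\Pi^1_2$ is a tree on $\om\cross u_\om$: its ordinal coordinates range over all the uniform indiscernibles $u_n$, $n<\om$, and its coherence conditions are stated via the canonical shift maps between the $u_n$'s, which are computed by representing ordinals $<u_\om$ as terms $t^{L[x]}(x,u_1,\ldots,u_n)$. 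So knowing $u_2^M=u_2$ and that $M$ is correct about sharps is not yet enough; you must also show that $u_n^M=u_n$ for every $n<\om$, and that every ordinal $<u_\om$ is of the form $t^{L[x]}(x,u_1,\ldots,u_n)$ for some $x\in\RR\cap M$, so that $M$'s version of the shift maps, and hence of the tree, is literally $T_2$ and not merely an isomorphic copy living on $M$'s possibly smaller indiscernibles. Your proposal never addresses the higher $u_n$'s at all.

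This missing step is exactly the content of the paper's proof, and it does not come for free from ``absoluteness'': one bootstraps from $u_2^M=u_2$ by noting that $u_{n+1}=\sup_{x\in\RR}\iota_0(x,u_n)$, that $\{\iota_0(x,\om_1)\mid x\in\RR\cap M\}$ is cofinal in $\{\iota_0(x,\om_1)\mid x\in\RR\}$ (both sets having supremum $u_2=u_2^M$), and that by indiscernibility $\iota_0(x,\om_1)<\iota_0(y,\om_1)$ iff $\iota_0(x,u_n)<\iota_0(y,u_n)$, so the cofinality of the $M$-reals transfers to every level, giving $u_{n+1}^M=u_{n+1}$ by induction. Once this is in place, every ordinal $<u_\om$ is $t^{L[x]}(x,u_1,\ldots,u_n)$ for some $x\in\RR\cap M$ (since $M$ satisfies the corresponding theorem about its own indiscernibles, which now coincide with the true ones), and then $T_2^M=T_2$ follows as you intend. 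Your ``consistency with the scale'' reformulation of node membership does not substitute for this, because without agreement of all the $u_n$'s the two models need not even agree on which ordinals are available as tree coordinates. The final packaging into a $\Sigma_1^{\Card}$ definition is fine and is the same as the paper's.
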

\begin{proof}
We already know $u_2^M=u_2$
and $u_1^M=\om_1^M=\om_1=u_1$.
Standard calculations with indiscernibles
give then that $u_n^M=u_n$ for all $n\in[1,\om)$. (That is,
for each $n$, $u_{n+1}=\sup_{x\in\RR}\iota_0(x,u_n)$. And $M$ satisfies that the same characterization holds. But
\[ \{\iota_0(x,\om_1)\bigm|x\in\RR\cap M\}\text{ is cofinal in }\{\iota_0(x,\om_1)\bigm|x\in\RR\},\]
since $u_2^M=u_2$ is the supremum of both of these sets. And for all reals $x,y$,
\[ \iota_0(x,\om_1)<\iota_0(y,\om_1)\iff\iota_0(x,u_n)<\iota_0(y,u_n),\]
by indiscernibility. It follows that
\[ \{\iota_0(x,u_n)\bigm|x\in\RR\cap M\}\text{ is cofinal in }\{\iota_0(x,u_n)\bigm|x\in\RR\},\]
and so $u_{n+1}^M=u_n=$ the common supremum of these sets.)

Now since every ordinal $<u_\om$
is of the form \[ t^{L[x]}(x,u_1,\ldots,u_n)=t^{L[x]}(x,u_1^M,\ldots,u_n^M) \]
for some $x\in\RR\cap M$
and $n<\om$, it follows that $T_2^M=T_2$.
\end{proof}

Using $T_2$, we can of course
define the standard tree projecting to a universal $\Sigma^1_3$ set.

Let $P$ be a sound $2$-small premouse
such that $\rho_\om^P=\om$.
We define the tree $T_P$ as follows;
it is a tree of attempts to prove that $P$ is \emph{not} $(\om,\om_1)$-iterable.
The tree $T_P$ is the natural tree of attempts to build a pair $(x,\pi)$ such that  $x\in\RR$, $\pi:\om\to u_\om$ and $(x,\pi)$ codes
the following objects:
\begin{enumerate}[label=(\arabic*)]
\item a countable $\om$-maximal
tree $\Tt$ on $P$, together with an order-preserving embedding  $\sigma:\lh(\Tt)\to\om_1$,
\item for each limit $\eta<\lh(\Tt)$,
letting $Q=Q(\Tt\rest\eta,[0,\eta)^\Tt)$,
 a witness $w_Q$ that $Q$ is above-$\eta$, $k$-$\Pi^1_2$-iterable,
where $k$ is least such that $\rho_{k+1}^Q\leq\delta(\Tt\rest\eta)$,
and $w_Q$ is the appropriate kind of branch through $T_2$,
\item if $\Tt$ has successor length,
an infinite descending sequence through $\OR^{M^\Tt_\infty}$,
\item if $\Tt$ has limit length,
a premouse $Q_\infty$ such that $M(\Tt)\ins Q_\infty$
and $Q_\infty\sats$ ``$\delta(\Tt)$ is Woodin'', and a witness $w^{Q_\infty}$
showing that $Q_\infty$ is above-$\delta(\Tt)$, $k$-$\Pi^1_2$-iterable,
where either:
\begin{enumerate}
\item $Q_\infty$ is non-$1$-small above $\delta(\Tt)$ and $k=0$, or
\item $Q_\infty$ is $1$-small above $\delta(\Tt)$ and $Q_\infty$ is $\delta(\Tt)$-sound and $k$ is least such that $\rho_{k+1}^{Q_\infty}\leq\delta(\Tt)$
\end{enumerate}
\item if $\Tt$ has limit length,
a structure $R$ for LST
such that
 $\Tt,Q_\infty\in\HC^R$ and $R\sats\ZF^-$
 and $R\sats$ ``there is no $\Tt$-cofinal branch $b$ such that $Q_\infty\ins M^\Tt_b$'',
 together with an order-preserving embedding $\pi:\OR^R\to\om_1$.
\end{enumerate}

\begin{clm}\label{clm:om-pm_P_it_iff_T_P_wfd}
$P$ is $(\om,\om_1)$-iterable
iff $T_P$ is wellfounded.
\end{clm}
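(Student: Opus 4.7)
I sketch both directions, with uniqueness of $\Pi^1_2$-iterable Q-structures and the $M_2$-likeness of the ambient $N$ as the key inputs.

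For $(\Rightarrow)$, fix an $(\om,\om_1)$-strategy $\Sigma$ for $P$ and suppose toward a contradiction that $T_P$ has an infinite branch, coding an $\om$-maximal $\Tt$ on $P$ together with the embedding $\sigma$, the branch-witnesses $w_Q$, and the top-of-tree failure data. I show by induction on $\eta$ that $[0,\eta)^\Tt = \Sigma(\Tt \rest \eta)$ at each limit $\eta < \lh(\Tt)$: both branches yield Q-structures for $M(\Tt \rest \eta)$ that are $\delta(\Tt \rest \eta)$-sound, $1$-small above $\delta(\Tt \rest \eta)$ (by $2$-smallness of $P$), and $\Pi^1_2$-iterable above $\delta(\Tt \rest \eta)$---the $T_P$-side by $w_Q$ via $T_2$, and the $\Sigma$-side as an initial segment of the iterable $M^\Tt_{\Sigma(\Tt \rest \eta)}$; comparison of two such Q-structures gives that they coincide, and a fixed Q-structure determines its branch through $\Tt \rest \eta$ via the generators of its cofinal extenders. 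Hence $\Tt$ is by $\Sigma$. In the successor-length case this contradicts wellfoundedness of $M^\Tt_\infty$ against the infinite descending sequence in (3). In the limit-length case, let $b = \Sigma(\Tt)$: in case (b), $Q_\infty$ is $\delta(\Tt)$-sound and $1$-small above $\delta(\Tt)$, and comparison with the Q-structure $Q(\Tt,b) \ins M^\Tt_b$ forces $Q_\infty = Q(\Tt,b) \ins M^\Tt_b$; in case (a), non-$1$-smallness of $Q_\infty$ above $\delta(\Tt)$ forces $M^\Tt_b \sats$ ``$\delta(\Tt)$ is Woodin'' and again $Q_\infty \ins M^\Tt_b$. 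Either way, the existence of $b$ contradicts what the structure $R$ of item (5) asserts.

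For $(\Leftarrow)$, argue the contrapositive. Suppose $P$ is not $(\om,\om_1)$-iterable; play the natural Q-structure-guided iteration until it fails, producing a countable $\om$-maximal $\Tt$, and at each prior limit $\eta$ an actual Q-structure $Q$ that is a segment of an iterable premouse and hence $\Pi^1_2$-iterable, furnishing $w_Q$ for item (2). If the failure is illfoundedness of $M^\Tt_\infty$ at a successor stage, item (3) is witnessed. If the failure is at a limit stage, produce $Q_\infty$ by exploiting the $M_2$-likeness of $N$: perform an $L[\es]$-construction above $M(\Tt)$ inside the collapse of a sufficiently large initial segment of $N$ beyond its second Woodin, obtaining a $\Pi^1_2$-iterable premouse $Q_\infty$ over $M(\Tt)$ satisfying ``$\delta(\Tt)$ is Woodin'', landing in case (a) or (b) according to whether $\Tt$ was maximal or (unextendably) short at this stage. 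The $T_2$-branch witnessing the $\Pi^1_2$-iterability of $Q_\infty$ is read off from the background strategy. Non-existence of a $\Tt$-cofinal $b$ with $Q_\infty \ins M^\Tt_b$ holds in $V$ by the failure to extend, and is reflected into a countable transitive $R \sats \ZF^-$ with $\Tt, Q_\infty \in \HC^R$, giving item (5).

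The main obstacle is the limit-stage failure case of $(\Leftarrow)$: actually producing $Q_\infty$ and its $\Pi^1_2$-iterability certificate inside $N$. This is exactly where $M_2$-likeness of $N$ together with Claim \ref{clm:T_2_is_Sigma_1^Card} enters, providing both the underlying premouse (via a background $L[\es]$-construction above $M(\Tt)$) and the $T_2$-witness. A secondary subtlety in $(\Rightarrow)$ is the branch-uniqueness step for $\Pi^1_2$-iterable $1$-small Q-structures over $M(\Tt \rest \eta)$, which uses the $2$-smallness of $P$ and is precisely what makes $\Pi^1_2$ the right level of iterability for the whole construction.
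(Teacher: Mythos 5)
Your argument follows essentially the same route as the paper's: in the forward direction you show the coded tree $\Tt$ is according to the unique strategy for $P$ by using the uniqueness of $\Pi^1_2$-iterable Q-structures (which is where the $2$-smallness of $P$ enters), and in the backward direction you run the Q-structure-guided putative strategy until it fails and convert the failure into a branch of $T_P$. Your backward direction supplies detail (the production of $Q_\infty$ and its $T_2$-witness inside the $M_2$-like $N$) that the paper leaves implicit at this point and only spells out later, in the proof of Claim \ref{clm:P_below_delta_0^N_it_iff_T_P_wfd}; that elaboration is consistent with the paper's framework.

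One step does not work as written: your treatment of case (a) in the limit-length, forward direction. You claim that non-$1$-smallness of $Q_\infty$ above $\delta(\Tt)$ forces $M^\Tt_b\sats$ ``$\delta(\Tt)$ is Woodin'' and $Q_\infty\ins M^\Tt_b$, and you then derive the contradiction from the statement asserted by $R$. But the conclusion $Q_\infty\ins M^\Tt_b$ is impossible here: if $Q_\infty\sats$ ``$\delta(\Tt)$ is Woodin'' and $Q_\infty$ is non-$1$-small above $\delta(\Tt)$, then $Q_\infty$ has a non-$2$-small level, whereas $M^\Tt_b$ is an iterate of the $2$-small $P$ and so all of its levels are $2$-small; hence $Q_\infty\nins M^\Tt_b$ no matter what. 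The correct move --- and the one the paper makes --- is that case (a) cannot arise at all when $P$ is iterable: a $\Pi^1_2$-iterable $Q_\infty$ of this kind lines up with $M_1(M(\Tt))$, so that $M_1(M(\Tt))|\delta(\Tt)^{+M_1(M(\Tt))}\ins Q_\infty$ and $M_1(M(\Tt))\sats$ ``$\delta(\Tt)$ is Woodin'', and this contradicts the $2$-smallness of $P$ (together with $\rho_\om^P=\om$ and the iterability of $P$, which guarantee a genuine Q-structure on the correct branch). So the contradiction in case (a) is direct and does not route through the structure $R$ of clause (5). The rest of your argument is sound.
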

\begin{proof}
Suppose $P$ is $(\om,\om_1)$-iterable,
but $x$ is a branch through $T_P$,
giving objects $\Tt,\sigma$, and if $\Tt$ has limit length, $Q_\infty,R,\pi$.
Note first that $\Tt$ is formed according to the unique strategy for $P$. For given a limit $\eta<\lh(\Tt)$,
we know that $Q=Q(\Tt\rest\eta,[0,\eta)^\Tt)$ is $k$-$\Pi^1_2$-iterable above $\delta(\Tt\rest\eta)$, for the relevant $k$. But because $P$ is $2$-small and fully iterable,
this implies that $Q$ is the correct (fully iterable) Q-structure,
and $[0,\eta)^\Tt$ is the correct branch. So if $\Tt$ had successor length, then $M^\Tt_\infty$ would be wellfounded, a contradiction.
So it has limit length. But then again,
$Q_\infty$ is $1$-small and is
an initial segment of the correct Q-structure,
so there \emph{is} a $\Tt$-cofinal branch with $Q_\infty\ins M^\Tt_b$, a contradiction. (It can't be that $Q_\infty$ is non-$1$-small,
since if it is, then $M_1(M(\Tt))|\delta(\Tt)^{+M_1(M(\Tt))}\ins Q_\infty$,
and so $M_1(M(\Tt))\sats$ ``$\delta(\Tt)$ is Woodin'', but this contradicts the $2$-smallness of $P$.)

Conversely, suppose $P$ is not $(\om,\om_1)$-iterable. Then the putative strategy for $P$, guided by Q-structures
which are iterable-above-$\delta(\Tt)$,
cannot succeed. Taking a (countable length)
tree witnessing this of minimal length,
we can use it to build a branch through $T_P$, as desired.
\end{proof}

We may assume (for $P$ still as above):
\begin{clm}\label{clm:om_1-it_iff_om_1+1-it} $P$ is $(\om,\om_1+1)$-iterable
iff $P$ is $(\om,\om_1)$-iterable.
\end{clm}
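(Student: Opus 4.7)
The $\Rightarrow$ direction is immediate. For the converse, suppose $P$ is $(\om,\om_1)$-iterable, and let $\Tt$ be an $\om$-maximal iteration tree on $P$ of length $\om_1$; we must produce a $\Tt$-cofinal branch $b$ extending $\Tt$ in a manner compatible with iterability. The plan is to reflect $\Tt$ to a countable tree via a L\"owenheim-Skolem hull, apply $(\om,\om_1)$-iterability there to get a Q-structure, and then transfer the resulting Q-structure back to $V$ using the $\Sigma^1_3$-absoluteness afforded by $T_2$.

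Fix a large regular $\theta$ and a countable $X\elem H_\theta$ with $\{P,\Tt,T_2\}\sub X$. Let $\pi:\bar N\to X$ be the inverse of the transitive collapse, and set $\bar\Tt:=\pi^{-1}(\Tt)$. Since $P$ is countable (being sound with $\rho_\om^P=\om$), $\pi\rest P=\id$, and $\bar\Tt$ is an $\om$-maximal tree on $P$ of countable length $\bar\alpha:=\pi^{-1}(\om_1)$. Applying $(\om,\om_1)$-iterability of $P$ to $\bar\Tt$, pick a $\bar\Tt$-cofinal branch $\bar b$. By the analysis in the proof of Claim \ref{clm:om-pm_P_it_iff_T_P_wfd} together with $2$-smallness of $P$, the associated Q-structure $\bar Q:=Q(\bar\Tt,\bar b)$ is the unique $\Pi^1_2$-iterable Q-structure above $\delta(\bar\Tt)$ extending $M(\bar\Tt)$, and in particular $\bar Q$ is $1$-small above $\delta(\bar\Tt)$.

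Now transfer back to $V$. The existence of such a Q-structure for $M(\bar\Tt)$ is a $\Sigma^1_3$ statement in reals coding $\bar\Tt$ with $T_2$ as parameter, so by Martin--Solovay $\Sigma^1_3$-absoluteness it is preserved between $V$ and suitable transitive models containing $T_2$ and the relevant reals. By the elementarity of $\pi$, the analogous statement for $\Tt$ then holds in $V$: there is a $\Pi^1_2$-iterable Q-structure $Q$ above $\delta(\Tt)$ extending $M(\Tt)$. Uniqueness of such $Q$ in the $2$-small setting, combined with the standard fact that a $\Pi^1_2$-iterable Q-structure for $M(\Tt)$ identifies a unique $\Tt$-cofinal branch $b$ with $Q\ins M^\Tt_b$, yields the required branch.

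The main delicacy is in the absoluteness step: the existence of a $\Pi^1_2$-iterable Q-structure must be expressed as a $\Sigma^1_3$ statement with $T_2$ as parameter in a form preserved by the Skolem hull, which is precisely what Claim \ref{clm:T_2_is_Sigma_1^Card} was established to supply. A routine case analysis handles possible drops along $b$ and rules out the non-$1$-small case for $Q$ via $2$-smallness of $P$, in the same spirit as in the proof of Claim \ref{clm:om-pm_P_it_iff_T_P_wfd}.
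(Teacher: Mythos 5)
Your proposal is a genuinely different route from the paper's, and the difference is worth highlighting: the paper does not prove this equivalence by any direct argument inside $N$ at all. Its entire proof is the observation that the statement (quantified over all sound $2$-small $P$ with $\rho_\om^P=\om$) is first-order over $M_2|\delta_1^{+M_2}$ and \emph{true in $M_2$}, so it can simply be added to the finite theory $\varphi_0$ that defines ``$M_2$-like''; the minimal $M_2$-like model $N$ then satisfies it by fiat. This is exactly why the claim is phrased ``We may assume''. Your hull-plus-$\Sigma^1_3$-absoluteness argument is, in spirit, the standard proof one would give of the corresponding fact \emph{in $M_2$ itself} (where full iterability is available), so it is not wasted content --- but as a proof of the claim \emph{inside $N$}, which is the model the paper is actually working in and which satisfies ``I am not fully iterable'', it carries burdens the paper deliberately avoids.

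Concretely, there are three gaps. First, ``applying $(\om,\om_1)$-iterability of $P$ to $\bar\Tt$, pick a $\bar\Tt$-cofinal branch $\bar b$'' is not meaningful: an iteration strategy only answers for trees played according to it, not for an arbitrary given tree of countable length. You must assume $\Tt$ is built by the (unique, Q-structure-guided) $(\om,\om_1)$-strategy, in which case $\bar\Tt=\Tt\rest\bar\alpha$ with $\bar\alpha=X\cap\om_1$ already comes equipped with the branch $[0,\bar\alpha)^\Tt$ and Q-structure $Q(\Tt\rest\bar\alpha,[0,\bar\alpha)^\Tt)\ins M^\Tt_{\bar\alpha}$; and one must separately argue that producing cofinal branches for strategy-built trees of length $\om_1$ suffices for $(\om,\om_1+1)$-iterability. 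Second, the assertion that this Q-structure is $\Pi^1_2$-iterable is exactly the point that needs proof inside the non-fully-iterable $N$; in $M_2$ it follows from full iterability of the iterates, but in $N$ it is not free, and nothing in Claims \ref{clm:om-pm_P_it_iff_T_P_wfd} or \ref{clm:T_2_is_Sigma_1^Card} supplies it. Third, the absoluteness step runs in the wrong direction as written: to use elementarity of $\pi$ you need the $\Sigma^1_3$ statement about $\bar\Tt$ to hold \emph{in the collapsed hull} $\bar N$, i.e.\ you need $\Sigma^1_3$-\emph{correctness} of $\bar N$ (downward transfer of the witness), which is a homogeneity/correctness property of the Martin--Solovay tree and of sharps in $N$, not the content of Claim \ref{clm:T_2_is_Sigma_1^Card} (which only identifies $\{T_2\}$ in a $\Sigma_1^{\Card}$ way). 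Each of these can likely be repaired with the closure properties $N$ inherits from being $M_2$-like, but none is routine; the paper's one-line transfer through $\varphi_0$ is the intended, and much cheaper, argument.
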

\begin{proof}
The corresponding statement holds in
 $M_2$, so we can build in into ``$M_2$-like''.
\end{proof}
\begin{clm}
$\{N|\om_1^N\}$ is $\Sigma_1^{\Card}$.
\end{clm}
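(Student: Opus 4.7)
The plan is to characterize $N|\om_1^N$ as the unique sound $2$-small premouse $y$ with $\OR^y$ equal to the second element of $\Card$ (which is $\om_1^N$) such that every sound $P\pins y$ with $\rho_\om^P=\om$ satisfies that $T_P$ is wellfounded, where $T_P$ is built from $P$ and $T_2$ as in Claim \ref{clm:om-pm_P_it_iff_T_P_wfd}. Any such $P$ is automatically countable in $N$. Because $\{T_2\}$ is $\Sigma_1^{\Card}$ by Claim \ref{clm:T_2_is_Sigma_1^Card}, and $\om_1^N$ is trivially definable from $\Card$, this characterization should translate directly into a $\Sigma_1^{\Card}$ definition of the singleton.

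More precisely, I would assert $y=N|\om_1^N$ iff there exist an ordinal $\alpha$ and a transitive $\ZFC^-$-model $H$ such that $\Card\cap\alpha\in H$; $H$ correctly computes $T_2$ from $\Card\cap\alpha$ in the manner of Claim \ref{clm:T_2_is_Sigma_1^Card}; $y\in H$ is a sound $2$-small premouse whose ordinal height equals the second element of $\Card$; and for every countable sound $P\pins y$ with $\rho_\om^P=\om$, $H$ contains a ranking function of $T_P$. Unpacked, this is first-order over $\langle H,\Card\cap\alpha\rangle$ and hence $\Sigma_1^{\Card}$ overall.

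For existence, a witness $H$ can be taken from a sufficiently large $N|\kappa$: every relevant $P\pins N|\om_1^N$ is a genuine iterable mouse in $N$ by the ambient iterability of $M_2^\#$, so $T_P$ is wellfounded by Claim \ref{clm:om-pm_P_it_iff_T_P_wfd}, and a ranking of $T_P$ exists in $N$. For uniqueness, given another candidate $y'$, I would coiterate $y'$ with $N|\om_1^N$ inside $N$: each Q-structure arising at a limit stage of the coiteration reduces (by reflection) to a countable sound initial segment to which the $T_P$-wellfoundedness condition applies, and hence is iterable in the sense of Claim \ref{clm:om-pm_P_it_iff_T_P_wfd}; the coiteration terminates, and matching ordinal heights $\OR^{y'}=\OR^{N|\om_1^N}=\om_1^N$ force $y'=N|\om_1^N$. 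The main obstacle is this uniqueness step: one must verify that $T_P$-wellfoundedness for countable proper initial segments really delivers the correct Q-structures throughout the coiteration rather than merely plausible ones, which is where careful use of the absoluteness of $\Pi^1_2$-iterability via $T_2$ has to enter.
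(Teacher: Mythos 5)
Your characterization is missing a clause that the paper's proof needs, and without it uniqueness fails. You ask only that $y$ be a sound $2$-small premouse with $\OR^y=\om_1$ such that every sound $P\pins y$ with $\rho_\om^P=\om$ has $T_P$ wellfounded. The paper additionally requires $M\sats\ZFC^-$ + ``every set is countable''. That clause is what forces the segments projecting to $\om$ to be cofinal in $M$, so that $M$ is literally the union of those segments; each such segment is $(\om,\om_1+1)$-iterable by Claims \ref{clm:om-pm_P_it_iff_T_P_wfd} and \ref{clm:om_1-it_iff_om_1+1-it}, hence lines up with the corresponding segment of $N$ by the standard comparison of countable sound mice projecting to $\om$, and uniqueness follows with no further work. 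Without the clause, consider $y=L_{\om_1}$ (computed in $N$): it is a sound $2$-small premouse of height $\om_1$ with $\rho_\om^y=\om_1$, its proper segments projecting to $\om$ are all below $\om_1^{L}<\om_1^N$ and are trivially $(\om,\om_1)$-iterable (no extenders), so every $T_P$ for $P\pins y$ with $\rho_\om^P=\om$ is wellfounded — yet $y\neq N|\om_1^N$. So your definition does not pin down a singleton.

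Your proposed repair for uniqueness — coiterating the full candidate $y'$ against $N|\om_1^N$ — does not follow from your hypotheses either. Wellfoundedness of $T_P$ for the countable proper segments $P\pins y'$ gives iterability of those segments, not of the height-$\om_1$ premouse $y'$ itself; and the Q-structures arising at limit stages of a coiteration of $y'$ live over common parts $M(\Tt)$ of trees on $y'$, which are not initial segments of $y'$, so the $T_P$-wellfoundedness condition says nothing about them. You correctly identify this as the main obstacle, but the resolution is not more careful absoluteness — it is to avoid coiterating $y'$ altogether by adding ``$\ZFC^-$ + every set is countable'' to the characterization and arguing segment by segment as above. Your existence direction and the use of Claim \ref{clm:T_2_is_Sigma_1^Card} to make the definition $\Sigma_1^{\Card}$ are in line with the paper.
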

\begin{proof}
By Claims \ref{clm:om-pm_P_it_iff_T_P_wfd} and \ref{clm:om_1-it_iff_om_1+1-it},
$N|\om_1^N$ is the unique 2-small premouse
$M$ which satisfies $\ZFC^-$ + ``Every set is countable'',
with $\OR^M=\om_1$,
and such that there is a transitive set $R\sats\ZFC^-$
with $T_2,M\in R$
and for every $P\pins M$ with $\rho_\om^P=\om$, we have $T_P\in R$
and $R\sats$ ``$T_P$ is wellfounded''.
By Claim \ref{clm:T_2_is_Sigma_1^Card}, this gives a $\Sigma_1^{\Card}$ definition of $\{N|\om_1^N\}$.
\end{proof}

\begin{clm}\label{clm:it_above_theta_implies_seg}
Let $\theta<\delta_1^N$ be a cardinal.
Let $P$ be a sound premouse such that $N|\theta\pins P$ and $\rho_\om^P=\theta$. Suppose that $P$ is above-$\theta$, $(\om,\theta^++1)$-iterable. Then $P\pins N$.
\end{clm}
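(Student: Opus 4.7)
The plan is a standard coiteration-plus-Dodd-Jensen argument carried out above the cardinal cutpoint $\theta$. Since $\theta<\delta_1^N$, the comparison lives well below the second Woodin of $N$, so termination and iterability can be arranged in $N$ via the toolkit developed in the previous claims.

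First I would strengthen $\varphi_0$ to assert, first-orderly over $N|\delta_1^{+N}$, that for every cardinal $\bar\theta<\delta_1^N$ and every sound $Q\pins N$ with $N|\bar\theta\pins Q$ and $\rho_\om^Q=\bar\theta$, a natural relativization $T_Q^{\bar\theta}$ of the tree $T_Q$ to trees above $\bar\theta$ is wellfounded, as witnessed by a transitive $\ZF^-$-set containing $T_2$. The relativized tree $T_Q^{\bar\theta}$ still searches over countable $\om$-maximal trees on $Q$ using only extenders with critical point $\geq\bar\theta$, with $\Pi^1_2$-iterability witnesses for Q-structures extracted from branches through $T_2$; since $\{T_2\}$ is $\Sigma_1^{\Card}$ by Claim \ref{clm:T_2_is_Sigma_1^Card}, this clause is admissible. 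The analogue of Claim \ref{clm:om_1-it_iff_om_1+1-it} I would also fold into $\varphi_0$, so that wellfoundedness of $T_Q^{\bar\theta}$ yields above-$\bar\theta$, $(\om,\bar\theta^++1)$-iterability of every such $Q$.

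Next I would coiterate $P$ with $N$ above $\theta$ (equivalently, with a sufficiently large initial segment $N|\gamma$ of $N$), obtaining trees $\Tt$ on $P$ and $\Uu$ on $N$ that use only extenders with critical point $\geq\theta$. The $P$-side is iterable by hypothesis; the $N$-side is iterable by the previous paragraph. The coiteration terminates in at most $\theta^{+N}$ stages. Standard Dodd-Jensen reasoning, exploiting the soundness of $P$ together with $\rho_\om^P=\theta$ and the fact that $N|\theta$ is a common initial segment, forces $\Tt$ to be trivial, so $P\ins M_\infty^\Uu$. Since every $\Uu$-extender has critical point $\geq\theta$, the embedding $i^\Uu_{0\infty}$ fixes every ordinal $\leq\theta$; hence the unique sound initial segment of $N$ extending $N|\theta$ with ordinal height $\OR^P$ and $\om$-th projectum equal to $\theta$ must coincide with the corresponding initial segment of $M_\infty^\Uu$, which is $P$. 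Therefore $P\pins N$.

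The main obstacle is the first step: arranging the $N$-side iterability as a first-order scheme inside $\varphi_0$. Given the construction of the trees $T_Q$ and Claims \ref{clm:identify_u_2}--\ref{clm:om_1-it_iff_om_1+1-it}, this is a routine modification, but care is needed to ensure that the $\Pi^1_2$-iterability witnesses for the relativized Q-structures remain uniformly available from $T_2$, and to verify that the usual soundness-based Dodd-Jensen analysis goes through uniformly in $\bar\theta$ rather than only for the countable case treated earlier.
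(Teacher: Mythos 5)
Your overall shape (coiterate $P$ against $N$, then use soundness and $\rho_\om^P=\theta$ to force the $P$-side to win outright) is the right one, but two essential steps are missing, and the first of them is really the heart of the claim. You declare that the coiteration can be carried out ``above $\theta$'' on both sides, producing trees that ``use only extenders with critical point $\geq\theta$.'' That is not something you get to stipulate: comparison is driven by least disagreement, and the extender indexing the least disagreement may well be total with critical point below $\theta$. If you artificially forbid such extenders, the standard termination argument for comparison no longer applies. The actual proof lets the $N$-side use such extenders freely (and must then deal with them later), and for the $P$-side it \emph{proves} that no extender with $\crit(E^\Uu_\alpha)<\theta$ is ever demanded before stage $\theta^+$: if one were, it would be total over $M^\Uu_\alpha$, hence total over the $N$-side model, and by the extender-maximality theorem \cite[Theorem 3.5]{extmax} it would already lie on $\es_+(M^\Tt_\alpha)$, contradicting that it indexes a least disagreement. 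Since only above-$\theta$ iterability of $P$ is hypothesized, without this step you have no licence to continue the $P$-side of the comparison at all.

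The second gap is termination. $N$ is a proper class (a weasel), so the comparison with the size-$\theta$ premouse $P$ could a priori run for $\theta^+$ stages and produce a maximal tree; your assertion that it ``terminates in at most $\theta^{+N}$ stages'' is not the issue --- what must be shown is that it terminates successfully \emph{before} $\theta^+$. This requires a universal-weasel argument: if the comparison reached stage $\theta^+$, then $\Uu$ uses $\theta^+$ many extenders with $\delta(\Uu)=\theta^+$ and some $i^\Uu_{\xi\theta^+}(\gamma)=\theta^+$, and one must show the $N$-side exhibits the same structure to derive the usual contradiction. That step has a genuinely delicate case: when $\alpha+1=\mathrm{succ}^\Tt(0,\theta^+)$ and $\kappa=\crit(E^\Tt_\alpha)<\theta$ with $\cof(\theta)=\kappa$, the image $i^\Tt_{0,\alpha+1}(\theta)$ jumps past $\theta^+$, and one needs the preservation fact (as in \cite{Kwithoutmeas}) that $M^\Tt_{\alpha+1}|\theta^+$ still satisfies ``$\theta$ is the largest cardinal and $\cof(\theta)=\kappa$'' to recover the fixed-point structure. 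Your appeal to an added first-order self-iterability scheme in $\varphi_0$ for the $N$-side is in the spirit of the paper (which simply invokes that $N$ is sufficiently self-iterable), and your endgame is roughly the standard soundness argument, but neither substitutes for the two missing steps above.
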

\begin{proof}
Suppose not and consider the comparison of $N$ versus $P$, producing trees $\Tt$ on $N$ and $\Uu$ on $P$, stopping at stage $\theta^++1$, or at the first stage $\alpha$ such that $E^\Uu_\alpha\neq\emptyset$ and $\crit(E^\Uu_\alpha)<\theta$. (Our hypothesis on the iterability of $P$ ensures that this makes sense,
since $N$ is sufficently self-iterable.)

Suppose that the comparison does not reach a successful termination.
It can't be that $\alpha<\theta^+$
(with $\crit(E^\Uu_\alpha)<\theta$).
For if this holds, then since we are using the correct strategy for $N$,
and $\lh(E^\Uu_\alpha)$ is the index of the least disagreement between $M^\Tt_\alpha$ and $M^\Uu_\alpha$,
\cite[Theorem 3.5]{extmax}
shows that $E^\Uu_\alpha\in\es_+(M^\Tt_\alpha)$, contradicting that it indexes the least disagreement.

So $\alpha=\theta^+$. In this case we use a typical universal weasel argument.
That is, since $\card(P)<\theta^+$,  $\Uu$ uses $\theta^+$-many extenders, with $\delta(\Uu)=\theta^+$,
so $\theta^+$ is a limit cardinal of $M^\Uu_{\theta^+}$ and there is $\xi<^\Uu\theta^+$
and $\gamma<\theta^+$
such that $(\xi,\theta^+)^\Uu$ does not drop and
$i^\Uu_{\xi\theta^+}(\gamma)=\theta^+$.
 We claim that ($*$) $\Tt$ also uses $\theta^+$-many extenders,
and there is $\xi'<^\Tt\theta^+$
and $\gamma'<\theta^+$
such that  $i^\Tt_{\xi'\theta^+}(\gamma')=\theta^+$.
For $\theta$ is the largest cardinal of $N|\theta^+$, and so ($*$) is easy to see unless $\kappa=\crit(E^\Tt_\alpha)<\theta$
where  $\alpha+1=\mathrm{succ}^\Tt(0,\theta^+)$,
so suppose this is the case.
If $\cof(\theta)\neq\kappa$
then $i^\Tt_{0,\alpha+1}(\theta^+)=\theta^+$,
and since then then it is again easy to see that ($*$) holds.
If instead $\cof(\theta)=\kappa$
then we get $i^\Tt_{0,\alpha+1}(\theta)>\theta^+$,
but as in \cite{Kwithoutmeas} (see also \cite[\S3.5]{downward_transference_of_mice}),
$M^{\Tt}_{\alpha+1}|\theta^+\sats$ ``$\theta$ is the largest cardinal and $\cof(\theta)=\kappa$'',
from which ($*$) easily follows.
We now get the usual kind of contradiction to termination of comparison of weasels.

So the comparison must terminate successfully
at some stage $\alpha<\theta^+$.
Suppose $\Uu$ is non-trivial.
Then (since $\Uu$ is above $\theta$)
$b^\Uu$ drops, $b^\Tt$
does not, and $M^\Tt_\infty\ins M^\Uu_\infty$. But this contradicts
the fact that $\OR^P<\theta^+<\OR^N$.
So $\Uu$ is trivial, and note then that $P\pins N$, as desired.
\end{proof}

Fix a sound 2-small premouse $P$
and let $\theta=\rho_\om^P$.
We define a tree $T_P$;
much as before, it is a tree of attempts to prove that $P$ is \emph{not}
above-$\theta$ iterable.
Define this $T_P$ just as before,
except that instead of building $\Tt$ on $P$,
$T_P$ also searches for a countable structure $\bar{P}$ and an elementary $\varrho:\bar{P}\to P$, with $\theta\in\rg(\varrho)$, and builds $\Tt$ on $\bar{P}$, above $\varrho^{-1}(\theta)$.
(So $T_P$ is a tree on $\om\cross\max(u_\om,\OR^P)$.)
\begin{clm}\label{clm:P_below_delta_0^N_it_iff_T_P_wfd}
Let $\theta,P,T_P$ be as in the previous paragraph. Then:
\begin{enumerate}
\item If $P$ is above-$\theta$, $(\om,\theta^++1)$-iterable
then  $T_P$ is wellfounded.
\item If $\theta<\delta_0^N$ and $T_P$ is wellfounded
then $P$ is above-$\theta$, $(\om,\theta^++1)$-iterable.
\end{enumerate}
\end{clm}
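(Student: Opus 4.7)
The plan is to adapt the argument of Claim \ref{clm:om-pm_P_it_iff_T_P_wfd} to the above-$\theta$ setting, using a L\"owenheim-Skolem step to handle trees on $P$ that may have length up to $\theta^+$.

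For part (1), I would suppose $P$ is above-$\theta$, $(\om,\theta^++1)$-iterable with strategy $\Sigma$, and assume toward contradiction that $T_P$ has an infinite branch. Decoding yields countable $\bar{P}$, an elementary $\varrho\colon\bar{P}\to P$ with $\theta\in\rg(\varrho)$, a countable $\om$-maximal tree $\Tt$ on $\bar{P}$ above $\bar\theta:=\varrho^{-1}(\theta)$, and the $\Pi^1_2$-iterability witnesses for Q-structures as in the previous construction. By copying via $\varrho$, $\bar{P}$ inherits from $\Sigma$ a strategy $\bar\Sigma$ above $\bar\theta$. The $\Pi^1_2$-iterability witnesses along $\Tt$ force, by uniqueness of $\Pi^1_2$-iterable Q-structures in the $2$-small context, that the Q-structures used to guide $\Tt$ at intermediate limits are correct, so $\Tt$ is by $\bar\Sigma$. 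The rest is then essentially verbatim Claim \ref{clm:om-pm_P_it_iff_T_P_wfd}: in the successor-length case, wellfoundedness of the $\bar\Sigma$-iterate contradicts the descending sequence; in the limit-length case, the correct branch $b=\bar\Sigma(\Tt)$ gives $Q_\infty\ins M^\Tt_b$ (the non-$1$-small option for $Q_\infty$ being excluded by $2$-smallness of $\bar{P}$), contradicting $R$.

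For part (2), I would argue the contrapositive: assume $T_P$ is wellfounded with $\theta<\delta_0^N$, and suppose toward contradiction that $P$ fails to be above-$\theta$, $(\om,\theta^++1)$-iterable. By a standard hull reduction, there is a countable elementary $\varrho\colon\bar{P}\to P$ with $\theta\in\rg(\varrho)$ such that $\bar{P}$ is not above-$\bar\theta$ iterable for countable trees via the putative strategy guided by $\Pi^1_2$-iterable Q-structures (using $T_2^N=T_2$ from Claim \ref{clm:T_2_is_Sigma_1^Card}); let $\bar\Tt$ be a countable failing tree on $\bar{P}$. If $\bar\Tt$ has successor length, its illfounded last model supplies the descending sequence required by clause (3) of the tree definition. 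If $\bar\Tt$ has limit length, I would take $Q_\infty$ to be the correct $1$-small $\Pi^1_2$-iterable Q-structure above $M(\bar\Tt)$ when it exists, and otherwise a suitable non-$1$-small premouse extending $M(\bar\Tt)$ and satisfying ``$\delta(\bar\Tt)$ is Woodin''; such $Q_\infty$ exists inside $N$ using that $N$ is $M_2$-like with the necessary $M_1^\#$-style closure (to be built into $\varphi_0$), and $\Pi^1_2$-iterability again comes from $T_2$. The structure $R$ is obtained by taking a countable transitive model of $\ZF^-$ containing $(\bar\Tt,Q_\infty)$ in which the $\Pi^1_1$ statement ``no cofinal branch $b$ of $\bar\Tt$ satisfies $Q_\infty\ins M^{\bar\Tt}_b$'' holds (absolute upward from its assumed truth in $V$). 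Packaging all this data produces the desired branch through $T_P$, contradicting wellfoundedness.

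The main obstacle is the limit-length subcase of (2): one must ensure, inside $N$, the existence of a $\Pi^1_2$-iterable premouse extending $M(\bar\Tt)$ and satisfying ``$\delta(\bar\Tt)$ is Woodin''---which requires including suitable $M_1^\#$-closure clauses in $\varphi_0$ and uses $\theta<\delta_0^N$ to place matters cleanly below $\delta_0^N$ where the relevant $M_1$-style resources are unproblematically available---and one must verify that the chosen $Q_\infty$ genuinely witnesses non-existence of a good cofinal branch, not merely of one with an iterable $1$-small Q-structure. The accompanying L\"owenheim-Skolem reduction from failure of $(\om,\theta^++1)$-iterability of $P$ to countable-hull failure, together with the encoding of witnesses as a branch through $T_P$, are then standard adaptations of the arguments already in place.
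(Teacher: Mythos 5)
Your proposal is correct and follows essentially the same route as the paper's (much terser) proof: part (1) by transferring the iterability of $P$ to the countable hull $\bar{P}$ and rerunning the argument of Claim \ref{clm:om-pm_P_it_iff_T_P_wfd}, and part (2) by producing a failing tree together with its canonical Q-structures (available since $N|\delta_0^N$ is sufficiently closed, as $\theta<\delta_0^N$) and taking a countable hull to generate a branch through $T_P$. The only cosmetic difference is that the paper hulls the failing tree and its Q-structures directly, whereas you hull $P$ first and then extract a countable failing tree; both are standard and equivalent here.
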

\begin{proof}
If $P$ is iterable above $\theta$,
then any $\bar{P}$ (as built by $T_P$) is also iterable above $\bar{\theta}=\varrho^{-1}(\theta)$,
which implies that $T_P$ is wellfounded, much as before. Conversely, suppose that $\theta<\delta_0^N$, but $P$ is not above-$\theta$, $(\om,\theta^++1)$-iterable above $\theta$.
Then we can fix an iteration tree witnessing this, together with the canonical Q-structures (or $M_1^\#(M(\Tt))$ satisfying ``$\delta(\Tt)$ is Woodin''), since $N|\delta_0^N$ is sufficiently closed to compute these (as $\theta<\delta_0^N$). But then we can take a countable hull of these objects to produce a branch through $T_P$.
\end{proof}

Now fix $\theta,P$ again as above,
and suppose that $P\sats$ ``there is a Woodin cardinal $\leq\theta$''.
Then we define a tree $T_P'$,
similar to $T_P$,
but now the Q-structures involved
are  just of form $\mathcal{J}_\alpha(M(\Tt))$ for some ordinal $\alpha$,
and the special case of producing
$M_1^\#(M(\Tt))$ satisfying ``$\delta(\Tt)$ is Woodin''
is replaced by $M(\Tt)^\#\sats$ ``$\delta(\Tt)$ is Woodin''.

\begin{clm}
Let $\theta,P,T'_P$ be as in the previous paragraph. Then:
\begin{enumerate}
\item If $P$ is above-$\theta$, $(\om,\theta^++1)$-iterable
then  $T'_P$ is wellfounded.
\item If $\theta<\delta_1^N$ and $T'_P$ is wellfounded
then $P$ is above-$\theta$, $(\om,\theta^++1)$-iterable.
\end{enumerate}
\end{clm}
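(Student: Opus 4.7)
The argument runs parallel to Claim \ref{clm:P_below_delta_0^N_it_iff_T_P_wfd}, exploiting the additional assumption $P\sats$ ``there is a Woodin $\leq\theta$''. The key structural observation is that, because $P$ is $2$-small with a Woodin $\leq\theta$, $P$ has no Woodin above $\theta$, and any iterate of $P$ above $\theta$ is $0$-small above (the image of) $\theta$. Consequently the genuine Q-structures for any above-$\theta$ iteration tree $\Tt$ on $P$ are precisely rudimentary closures $\mathcal{J}_\alpha(M(\Tt))$, and the only way for the putative strategy to fail to produce such a Q-structure at a limit stage is that $M(\Tt)^\#\sats$ ``$\delta(\Tt)$ is Woodin''. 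This is exactly the matched format of $T'_P$.

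For part (1), assume $P$ is above-$\theta$, $(\om,\theta^++1)$-iterable, and suppose for contradiction that $T'_P$ has a branch, giving a countable elementary $\varrho:\bar P\to P$ with $\theta\in\rg(\varrho)$, $\bar\theta=\varrho^{-1}(\theta)$, and an above-$\bar\theta$ tree $\Tt$ on $\bar P$ together with the supplementary data. Copy along $\varrho$ to get above-$\bar\theta$ iterability of $\bar P$. By the observation above, at each limit $\eta<\lh(\Tt)$ the correct Q-structure is the unique rud-closure $\mathcal{J}_\alpha(M(\Tt\rest\eta))$ with appropriate projection or non-Woodinness, and this must agree with the Q-structure supplied by the branch; hence $\Tt$ is by the correct strategy. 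If $\Tt$ has successor length, its last model is wellfounded, contradicting the infinite descending sequence; if limit length, the correct cofinal branch $c$ satisfies $Q_\infty\ins M^\Tt_c$ (the escape alternative $M(\Tt)^\#\sats$ ``$\delta(\Tt)$ is Woodin'' is ruled out, since the iterate of $\bar P$ provides a non-Woodin initial segment), contradicting the LST-structure $R$ that witnesses the absence of such a branch.

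For part (2), assume $\theta<\delta_1^N$ and that $P$ fails above-$\theta$, $(\om,\theta^++1)$-iterability. Take a countable elementary $\varrho:\bar P\to P$ with $\theta\in\rg(\varrho)$ and, using that non-iterability descends along $\varrho$, fix a putative above-$\bar\theta$ tree $\Tt$ on $\bar P$ of minimal countable length witnessing failure. Since $\theta<\delta_1^N$ and $\delta_1^N$ is Woodin in $N$, the relevant sharps and rudimentary closures can be computed inside $N|\delta_1^N$ (via a background $L[\es]$-construction argument there, analogous to the one used in the previous claim): for each limit $\eta$ below the failure point we obtain the correct Q-structure $\mathcal{J}_\alpha(M(\Tt\rest\eta))$, and at the failure point we get either an illfounded last model, a rud-closure $Q_\infty$ together with an LST-structure $R$ witnessing the absence of a compatible branch, or $M(\Tt)^\#\sats$ ``$\delta(\Tt)$ is Woodin''. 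Coding these up yields a branch through $T'_P$, contradicting wellfoundedness.

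The main obstacle is the structural verification that under $2$-smallness and the hypothesis of a Woodin $\leq\theta$, every Q-structure encountered while iterating $P$ above $\theta$ is indeed of the form $\mathcal{J}_\alpha(M(\Tt))$, with the only escape being $M(\Tt)^\#\sats$ ``$\delta(\Tt)$ is Woodin'' — so that the simpler search conducted by $T'_P$ correctly detects non-iterability — together with, in part (2), verifying that $N|\delta_1^N$ has sufficient closure (via the Woodinness of $\delta_1^N$ above $\theta$) to identify these Q-structures and recognize the escape case.
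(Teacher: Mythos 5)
Your argument is correct and takes essentially the same approach as the paper: the paper's own proof is just the one-line remark that this is a simplification of the proof of Claim \ref{clm:P_below_delta_0^N_it_iff_T_P_wfd}, and your write-up carries out exactly that simplification. The key point you identify --- that $2$-smallness together with the Woodin of $P$ below $\theta$ forces every genuine Q-structure for an above-$\theta$ tree $\Tt$ to be a level $\mathcal{J}_\alpha(M(\Tt))$, with the only escape being $M(\Tt)^\#\sats$ ``$\delta(\Tt)$ is Woodin'' --- is precisely why the tree $T'_P$ is defined as it is.
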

\begin{proof}
This is a simplification of the proof of Claim \ref{clm:P_below_delta_0^N_it_iff_T_P_wfd}.
\end{proof}

\begin{clm}\label{clm:any_good_2-small_premouse_is_N} Let $M$ be a $2$-small premouse satisfying ``there are 2 Woodin cardinals and $\delta_1^+$ exists'',
$M|\delta_1^{+M}\sats\varphi_0$,
$\delta_1^{+M}\in\Card$,
and $\Card\cap\delta_1^{+M}=\Card^M\cap\delta_1^{+M}$,
and for every cardinal $\theta<\delta_1^M$ and $P$
such that $P\ins M$ and $\rho_\om^P=\theta$, we have:
\begin{enumerate}
\item if $\theta<\delta_0^M$
then $T_P$ is wellfounded, and
\item if $\delta_0^M\leq \theta<\delta_1^M$ then $T'_P$ is wellfounded.
\end{enumerate}
Then $M|\delta_1^M=N|\delta_1^N$.
\end{clm}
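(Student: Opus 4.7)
The plan is to proceed by induction on cardinals $\theta$ of $N$ (which, by the hypothesis $\Card\cap\delta_1^{+M}=\Card^M\cap\delta_1^{+M}$, coincide with cardinals of $M$ below $\delta_1^{+M}$) with $\theta<\delta_1^N$, showing $M|\theta^{+M}=N|\theta^{+N}$ at every stage, and then to conclude by establishing $\delta_1^M=\delta_1^N$ and taking the union. The base case at $\theta=\om$ reduces to the earlier uniqueness argument showing that $\{N|\om_1^N\}$ is $\Sigma_1^{\Card}$, applied to $M|\om_1^M$ in place of $N|\om_1^N$; limit cardinal stages are handled by union.

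For the successor step at a cardinal $\theta<\delta_1^N$, assuming $M|\theta=N|\theta$ from the induction, I take any $P\pins M$ with $M|\theta\pins P$ and $\rho_\om^P=\theta$. If $\theta<\delta_0^M$ and $\theta<\delta_0^N$, the hypothesis gives $T_P$ wellfounded and Claim \ref{clm:P_below_delta_0^N_it_iff_T_P_wfd}(2) yields $P$ above-$\theta$, $(\om,\theta^++1)$-iterable; if $\delta_0^M\leq\theta<\delta_1^N$, the hypothesis gives $T'_P$ wellfounded and the analogous claim for $T'_P$ yields the same iterability. In either case, since $N|\theta=M|\theta\pins P$ and $\theta<\delta_1^N$, Claim \ref{clm:it_above_theta_implies_seg} gives $P\pins N$. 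Taking the union over such $P$ and using $\theta^{+M}=\theta^{+N}$ from cardinal matching yields $M|\theta^{+M}=N|\theta^{+N}$.

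The main obstacle is forcing the case-split $\theta\lessgtr\delta_0^M$ in the hypothesis to line up with the conditions $\theta<\delta_0^N$ and $\theta<\delta_1^N$ required by the conversion claims, which reduces to establishing $\delta_0^M=\delta_0^N$ and $\delta_1^M=\delta_1^N$. The minimality of $(\om_1^0,u_2^0,\delta_0^0,\delta_0^{+0},\delta_1^0,\delta_1^{+0})$ immediately gives $\delta_0^M\geq\delta_0^N$ and $\delta_1^M\geq\delta_1^N$; for the reverse inequalities, I use that Woodin-ness of $\delta$ is an $H_{\delta^+}$-property. Suppose $\delta_0^M>\delta_0^N$ for contradiction. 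The induction for $\theta<\delta_0^N$ yields $M|\delta_0^N=N|\delta_0^N$, and pushing one step further to $M|(\delta_0^N)^{+M}=N|(\delta_0^N)^{+N}$ transfers Woodin-ness of $\delta_0^N$ from $N$ to $M$, contradicting $\delta_0^M$ being $M$'s least Woodin. The extra step requires extending Claim \ref{clm:P_below_delta_0^N_it_iff_T_P_wfd}(2) to $\theta=\delta_0^N$, which is justified because $N|\delta_1^N$, being below $N$'s second Woodin, is closed under $M_1^\#(\cdot)$ and so still computes the relevant Q-structures needed to convert wellfoundedness of $T_P$ into iterability. The analogous argument at $\theta=\delta_1^N$, using the closure of $N$ above $\delta_1^N$, gives $\delta_1^M=\delta_1^N$.

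With $\delta_0^M=\delta_0^N$ and $\delta_1^M=\delta_1^N$ in hand, the case-split in the induction is exhaustive for every $\theta<\delta_1^M=\delta_1^N$, the induction runs through without gaps, and taking the union yields $M|\delta_1^M=N|\delta_1^N$, as required.
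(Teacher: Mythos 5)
Your overall skeleton is the paper's: an induction on cardinals $\theta$, converting the wellfoundedness of $T_P$ (resp.\ $T'_P$) into above-$\theta$ iterability of $P$ and then invoking Claim \ref{clm:it_above_theta_implies_seg} to get $P\pins N$, with the crux being to line up $\delta_0^M$ with $\delta_0^N$ and $\delta_1^M$ with $\delta_1^N$. The gap is in how you handle that crux. You propose to push the agreement one cardinal past $\delta_0^N$, to $M|(\delta_0^N)^{+M}=N|(\delta_0^N)^{+N}$, and for this you must apply Claim \ref{clm:P_below_delta_0^N_it_iff_T_P_wfd}(2) at the boundary value $\theta=\delta_0^N$, outside its stated hypothesis; you assert this extension on the grounds that $N|\delta_1^N$ is closed under $M_1^\#$, but you do not prove it, and it is not free: one must check that the Q-structures for trees above $\delta_0^N$ (now sitting on top of a premouse in which $\delta_0^N$ is Woodin in $N$'s sense but, under your contradiction hypothesis, not in $P$'s) are still correctly computed and certified via $T_2$. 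Worse, at the top you need the same move at $\theta=\delta_1^N$, and there \emph{both} the $T'_P$ conversion claim \emph{and} Claim \ref{clm:it_above_theta_implies_seg} are being applied outside their hypotheses (each requires $\theta<\delta_1^N$); you do not address the latter at all. Also, "minimality immediately gives $\delta_1^M\geq\delta_1^N$" is not right as stated: the minimality is lexicographic, so it yields $\delta_1^N\leq\delta_1^M$ only after $\delta_0^M=\delta_0^N$ and $\delta_0^{+M}=\delta_0^{+N}$ have been secured.

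The detour is unnecessary. Since the whole argument takes place inside $N$, the set premouse $M$ is literally contained in $N$, so $\pow(\delta_0^N)\cap M\subseteq\pow(\delta_0^N)\cap N$. Once the induction (run only over cardinals $\theta<\delta_0^N$, i.e.\ strictly within the hypotheses of Claims \ref{clm:it_above_theta_implies_seg} and \ref{clm:P_below_delta_0^N_it_iff_T_P_wfd}) gives $M|\delta_0^N=N|\delta_0^N$, the extenders on the common sequence $\es^M\rest\delta_0^N=\es^N\rest\delta_0^N$ that witness Woodinness of $\delta_0^N$ in $N$ for \emph{all} of $N$'s subsets of $\delta_0^N$ in particular witness it for all of $M$'s subsets; hence $\delta_0^N$ is already Woodin in $M$ and $\delta_0^M\leq\delta_0^N$, with no need for agreement at $(\delta_0^N)^{+}$. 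The same containment argument disposes of $\delta_1$. This is how the paper proceeds, and it avoids every one of the boundary-case extensions your version depends on. So: right architecture, but the step you yourself identify as "the main obstacle" is resolved by an argument with real holes, where a one-line containment argument suffices.
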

\begin{proof}
As in the proof of Claim \ref{clm:identify_u_2},
$M$ is $M_2$-like, $\om_1^M=\om_1$ and $u_2^M=u_2$.
So by minimality, $\delta_0^N\leq\delta_0^M$,
and if $\delta_0^N=\delta_0^M$
(and hence $\delta_0^{+N}=\delta_0^{+M}$, by the correctness of $M$'s cardinals),
then $\delta_1^N\leq\delta_1^M$,
and if $\delta_1^N=\delta_1^M$
then $\delta_1^{+N}=\delta_1^{+M}$.

Now $M|\delta_0^M=N|\delta_0^N$.
For let $\theta<\delta_0^N$
be a cardinal and suppose $M|\theta=N|\theta$, and let $P\pins M$ with $\rho_\om^P=\theta$.
Then by hypothesis
(and since $\delta_0^N\leq\delta_0^M$),
$T_P$ is wellfounded, so by Claims
\ref{clm:it_above_theta_implies_seg} and \ref{clm:P_below_delta_0^N_it_iff_T_P_wfd},
$P\pins N$, as desired. Since also $\theta^{+M}=\theta^{+N}$, it follows
that $M|\theta^{+M}=N|\theta^{+N}$.

Now since $M|\delta_0^N=N|\delta_0^N$
and $M\sub N$, it follows that $\delta_0^M$ is Woodin in $M$.
So $\delta_0^M=\delta_0^N$,
and so $\delta_0^{+M}=\delta_0^{+N}$.

A similar calculation as before
now gives that $M|\delta_1^N=N|\delta_1^N$,
and then again since $M\sub N$,
it follows that $\delta_1^M=\delta_1^N$
and $\delta_1^{+M}=\delta_1^{+N}$,
as desired.
\end{proof}

\begin{clm}
$\{N|\delta_1^N\}$ is $\Sigma_1^{\Card}$.
\end{clm}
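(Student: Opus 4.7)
My plan is to combine Claim \ref{clm:T_2_is_Sigma_1^Card} with the uniqueness supplied by Claim \ref{clm:any_good_2-small_premouse_is_N} to produce a $\Sigma_1^{\Card}$ definition of $N|\delta_1^N$. By Claim \ref{clm:any_good_2-small_premouse_is_N}, any $2$-small premouse $M$ with $M|\delta_1^{+M} \sats \varphi_0$ that also satisfies the cardinal-correctness condition $\Card \cap \delta_1^{+M} = \Card^M \cap \delta_1^{+M}$ and the wellfoundedness of $T_P$ or $T'_P$ (as appropriate) for every relevant $P \pins M$ already has $M|\delta_1^M = N|\delta_1^N$. So it suffices to express the existence of such an $M$ as a set in $\Sigma_1^{\Card}$-form.

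Concretely, I write the formula asserting the existence of an ordinal $\alpha$, the tree $T_2$ (recovered from $\Card \cap \alpha$ via the $\Sigma_1^{\Card}$-definition of Claim \ref{clm:T_2_is_Sigma_1^Card}), a set-sized $2$-small premouse $M$ with $\delta_1^{+M} < \alpha$ and $M|\delta_1^{+M} \sats \varphi_0$, and a transitive set $R \sats \ZFC^-$ with $T_2, M|\delta_1^{+M} \in R$, subject to the atomic $\Card$-clauses $\delta_1^{+M} \in \Card$ and $\Card \cap \delta_1^{+M} = \Card^M \cap \delta_1^{+M}$, together with the requirement that for every $P \pins M$ with $\theta = \rho_\om^P$ a cardinal of $M$ and $\theta < \delta_1^M$, the appropriate tree ($T_P$ when $\theta < \delta_0^M$, and $T'_P$ otherwise) lies in $R$ and is wellfounded as computed inside $R$. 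The declared output is $y = M|\delta_1^M$. The only uses of the predicate $\Card$ are the displayed atomic clauses and the $\Sigma_1^{\Card}$-recovery of $T_2$; every other conjunct is $\Delta_0$ in $M, R, T_2$, since the universal quantifier over $P$ is bounded by $M$. Hence the overall formula is $\Sigma_1^{\Card}$.

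For the forward direction, I verify that some set-sized $M$ with $M|\delta_1^M = N|\delta_1^N$ -- e.g.\ a sufficiently tall initial segment $N|\alpha$ -- witnesses the formula. The two $\Card$-clauses are then immediate because we are working inside $N$. The one serious point is producing the witnessing set $R$ inside $N$, and this reduces to $N|\delta_1^{+N}$ satisfying, as a single first-order sentence, the simultaneous wellfoundedness of all the relevant $T_P$ and $T'_P$, collected into a common transitive $\ZFC^-$ model. I secure this by folding that sentence into $\varphi_0$: using the internal $\Sigma_1^{\Card}$-definition of $T_2$ it really is a first-order schema, and it holds in $M_2|\delta_1^{+M_2}$ because $M_2$ is fully iterable and $M_2 \sats \ZFC$ permits the requisite set-formation. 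Hence the sentence is a legitimate addition to $\varphi_0$, and it transfers to $N|\delta_1^{+N}$ by $M_2$-likeness. The main obstacle throughout is this last packaging step -- expressing simultaneous wellfoundedness of all the $T_P$, $T'_P$ as one first-order assertion that can be committed to $\varphi_0$ -- after which the remaining verification is routine bookkeeping.
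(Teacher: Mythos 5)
Your proposal is correct and follows essentially the same route as the paper: the formula asserts the hypotheses of Claim \ref{clm:any_good_2-small_premouse_is_N} (which gives uniqueness), rendered $\Sigma_1^{\Card}$ by recovering $T_2$ via Claim \ref{clm:T_2_is_Sigma_1^Card} and demanding a transitive model ranking the trees $T_P$, $T'_P$. Your explicit treatment of the forward direction---folding the existence of the ranking model $R$ into $\varphi_0$ as a first-order sentence true in $M_2|\delta_1^{+M_2}$---is exactly the ``clauses we might retroactively add later'' that the paper leaves implicit.
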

\begin{proof}
This follows easily from the preceding claims. Our $\Sigma_1^{\Card}$ formula  $\varphi(M)$ (in variable $M$)
just demands that $M$ satisfies the
 hypothesis of Claim \ref{clm:any_good_2-small_premouse_is_N},
 by asserting that there is a transitive model which ranks the relevant trees $T_P$ and $T'_P$;
by earlier claims, we can identify these trees in a  $\Sigma_1^{\Card}$ fashion. We then get that $\varphi(M)$ holds iff $M=N|\delta_1^N$.
\end{proof}

It now follows that $N\sats$ ``The power set function is $\Sigma_1^{\Card}$'',
 like in the proof for $L$:
 Given $x,y$, we have $y=\pow(x)$
 iff there is a cardinal $\theta$
 such that $x,y\in L_\theta(N|\delta_1^N)$
 and $y=\pow(x)\cap L_\theta(N|\delta_1^N)$. This completes the proof.
\end{proof}

\begin{ques}
Suppose $M_2^\#$ exists and is $(0,\om_1+1)$-iterable. Does $M_2$ satisfy ``power is $\Sigma_1^{\Card}$?
\end{ques}

\begin{ques}
Is the theory ZFC + ``there are 2 Woodin cardinals and a measurable above'' + ``power is $\Sigma_1^{\Card}$'' consistent (relative to large cardinals)?
\end{ques}

\section*{Acknowledgements}

This research was funded by the Austrian Science Fund (FWF) [10.55776/Y1498].

\bibliographystyle{plain}
\bibliography{../bibliography/bibliography}

\end{document}